\date{September 10, 2021}
\title[Symmetries of cross caps]{Symmetries of cross caps}
\author[A. Honda]{Atsufumi Honda}
\address[Atsufumi Honda]{
Department of Applied Mathematics, 
Faculty of Engineering, Yokohama National University,
79-5 Tokiwadai, Hodogaya, Yokohama 240-8501, Japan
}
\email{honda-atsufumi-kp@ynu.ac.jp}
\author[K. Naokawa]{Kosuke Naokawa}
\address[Kosuke Naokawa]{%
Department of Computer Science, 
Faculty of Applied Information Science,
Hiroshima Institute of Technology,  
2-1-1 Miyake, Saeki, Hiroshima 731-5193, Japan
}
\email{k.naokawa.ec@cc.it-hiroshima.ac.jp}
\author[K. Saji]{Kentaro Saji}
\address[Kentaro Saji]{%
  Department of Mathematics,
  Faculty of Science,
  Kobe University,
  Rokko, Kobe 657-8501}
\email{saji@math.kobe-u.ac.jp}
\author[M. Umehara]{Masaaki Umehara}
\address[Masaaki Umehara]{%
  Department of Mathematical and Computing Sciences,
  Tokyo Institute of Technology,
  Tokyo 152-8552, Japan}
\email{umehara@is.titech.ac.jp}
\author[K. Yamada]{Kotaro Yamada}
\address[Kotaro Yamada]{%
  Department of Mathematics,
  Tokyo Institute of Technology,
  Tokyo 152-8551, Japan}
\email{kotaro@math.titech.ac.jp}
\keywords{
  {singular point},
  {cross cap}, {Whitney's umbrella}
}
\subjclass[2010]{Primary 57R45, Secondary 53A05}
\thanks{%
The first author was partially supported by 
Grant-in-Aid for Early-Career Scientists
 No.~19K14526 and No. 20H01801. 
The second author was partially supported by 
Grant-in-Aid for Young Scientists (B) No.~17K14197,
and the third author
was 
partially supported by  Grant-in-Aid for 
Scientific Research (C) No.\ 18K03301.
The forth author 
was partially 
supported by Grant-in-Aid for 
Scientific Research (B) No.\ 21H00981.
The fifth author 
was partially 
supported by Grant-in-Aid for 
Scientific Research (B) No.\ 17H02839.
}%
\newcommand{\vect}[1]{{\boldsymbol{#1}}}
\newcommand{\R}{\boldsymbol{R}}
\newcommand{\mb}[1]{{\mathbf #1}}
\newcommand{\pmt}[1]{{\begin{pmatrix} #1  \end{pmatrix}}}
\renewcommand{\phi}{\varphi}
\renewcommand{\epsilon}{\varepsilon}
\newcommand{\dy}{\displaystyle}
\numberwithin{equation}{section}
\newtheorem{Theorem}{Theorem}[section]
\newtheorem{Corollary}[Theorem]{Corollary}
\newtheorem{Lemma}[Theorem]{Lemma}
\newtheorem{Fact}[Theorem]{Fact}
\theoremstyle{definition}
\newtheorem{Def}[Theorem]{Definition}
\newtheorem{Remark}[Theorem]{Remark}
\newtheorem{Example}[Theorem]{Example}
\newtheorem*{acknowledgments}{Acknowledgments}
       \def\@makefnmark{%
               \leavevmode
               \raise.9ex\hbox{\check@mathfonts
                       \fontsize\sf@size\z@\normalfont%
                               \@thefnmark}%
       }
\begin{document}
\maketitle

\begin{abstract}
It is well-known that cross caps on surfaces
in the Euclidean 3-space can be expressed in Bruce-West's
normal form, 
which is a special local coordinate system centered
at the singular point.
In this paper, we show a certain kind of uniqueness of
such a coordinate system. In particular, the
functions associated with this coordinate system
produce new invariants on cross cap singular points.
Using them, we characterize the possible symmetries 
on cross caps.
\end{abstract}

\section*{Introduction}
Cross caps (which are also called Whitney's umbrellas, see 
Figure \ref{fig:cr-explain}, left)
are the only singular points of stable maps of surfaces to
3-manifolds, and investigated by
several geometers  \cite{BT,BW,I6, FH, I1,I2,I3,HHNSUY-2015,
HHNUY-2014, HNUY, I4,I5}.

Let $f\colon{}U\to\R^3$ be a $C^{\infty}$-map  
from a domain $U$ in the Euclidean plane $\R^2$
into the Euclidean space $\R^3$ 
having a cross cap at $p\in U$.
Then there exists a 
local coordinate system $(u,v)$ centered at 
$p$ satisfying $f_v(0,0)=\vect{0}$,
where $f_v:=\partial f/\partial v$ and $\vect{0}:=(0,0,0)$.
Since the rank of the Jacobi matrix of $f$ is one,
we have $f_{u}(0,0)\neq \vect{0}$.
By the well-known criterion for a cross cap
by Whitney, $\{f_u,f_{uv},f_{vv}\}$ is 
linearly independent at $(0,0)$.

\begin{figure}[thb]
 \centering
  \includegraphics[height=3.6 cm]{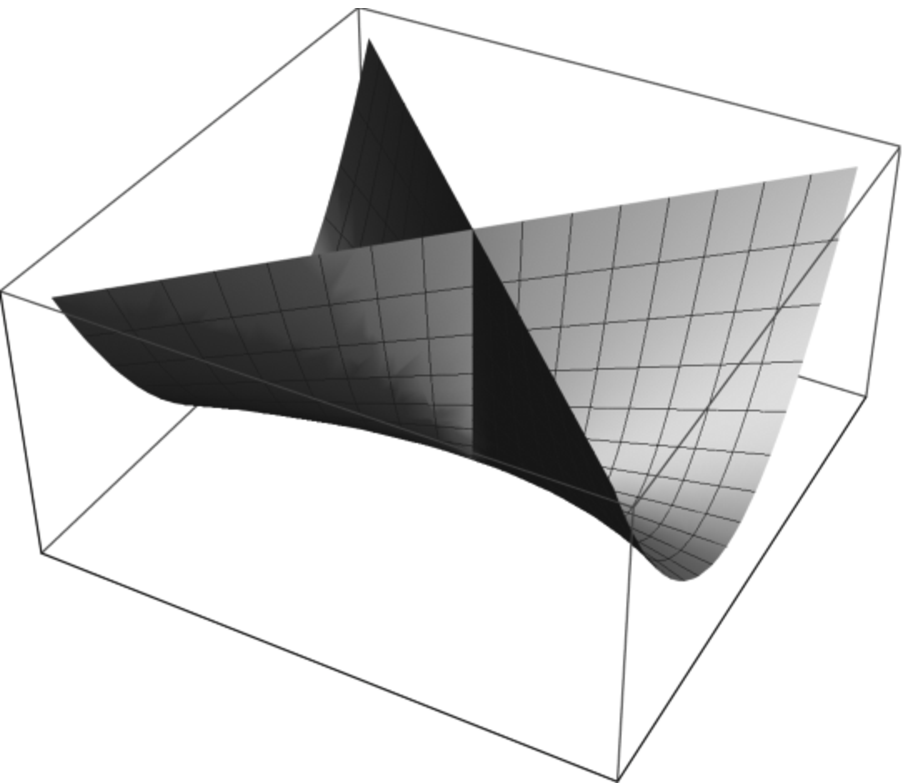}\quad \qquad
  \includegraphics[height=5.1 cm]{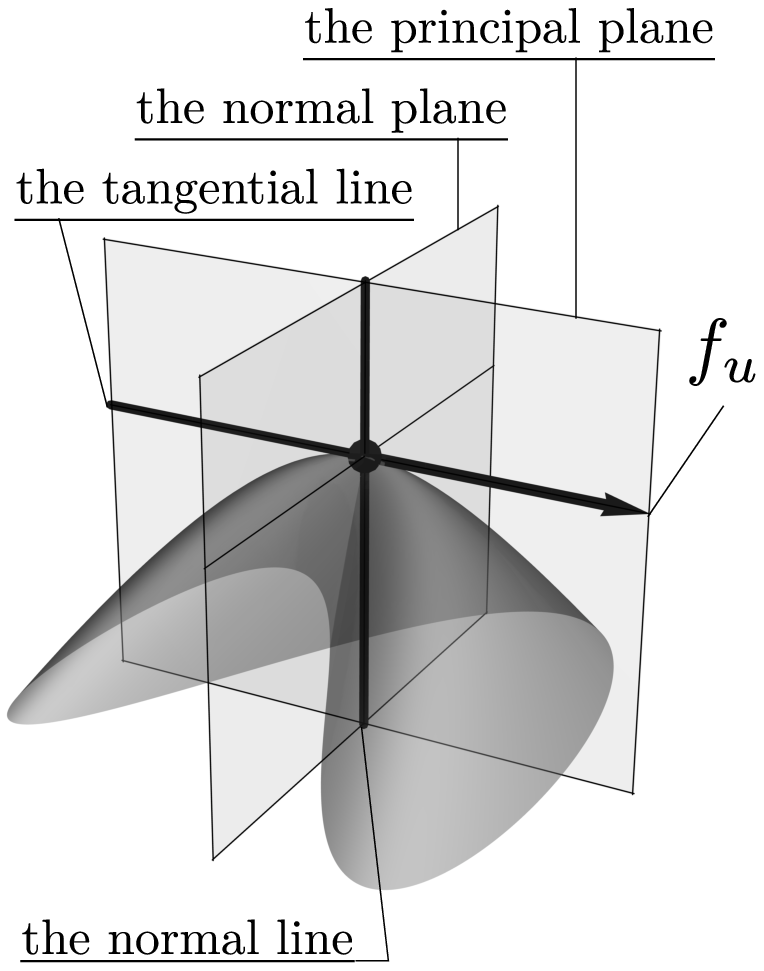}
\caption{The image of the standard cross cap (left) and
the tangent and normal lines, 
and principal and normal planes of 
a cross cap with three symmetries with $a_{20}>0$ (right)}\label{fig:cr-explain}
\end{figure}

\begin{Def}\label{def:cr}
We fix a cross cap $f:(U;u,v)\to\R^3$
satisfying $f_v(0,0)=\mb 0$.
We call the line
$$
\left\{f(0,0) + tf_u(0,0)\,;\,t\in\R\right\}
$$
the {\it tangent line} at the cross cap.
The plane in $\R^3$ passing through $f(0,0)$ spanned
by $f_u(0,0)$ and $f_{vv}(0,0)$ is called the {\it principal plane}
or the {\it co-normal plane}.
On the other hand, the plane
passing through $f(0,0)$ perpendicular to the tangent line 
is called the {\it normal plane}
(see Figure \ref{fig:cr-explain}, right).
Moreover, the line obtained 
as the intersection of the principal plane
and the normal plane is called the {\it normal line}. 
\end{Def}

We note that the tangent line, the principal plane
and the normal plane do not depend on 
the choice of admissible coordinate systems 
(cf. \cite{HHNUY-2014} and \cite{HHNSUY-2015}).

A local coordinate system $(u,v)=(u(r_1,r_2),v(r_1,r_2))$
of $(\R^2;r_1,r_2)$ is called {\it positive} 
(resp. {\it negative}) if the Jacobian
$u_{r_1}v_{r_2}-v_{r_1}u_{r_2}$
is positive (resp. negative).
The following fact is known:

\begin{Fact}[cf. \cite{BW} and \cite{west}, see also
 \cite{FH} and \cite{HNUY}]\label{thm:west-2}
Let $f\colon{}U\to\R^3$ be a $C^{\infty}$-map having a cross cap at $p\in U$.
Then there exist a positive local coordinate system 
$(u,v)$
centered at $p$ and
$C^\infty$-functions $a(u,v),\,\,b(v)$ satisfying
\begin{equation}\label{eq:cb}
b(0)=b'(0)=b''(0)=0,\quad
a(0,0)=a_u(0,0)=a_v(0,0)=0,\quad a_{vv}(0,0)>0
\end{equation}
such that
 \begin{equation}\label{eq:west-2}
   f(u,v)=
        \left(
	   u,
	   uv +b(v),
	   a(u,v)
	\right)
\end{equation}
after composing an appropriate isometric motion in $\R^3$.
In this expression, $\vect{e}_1:=(1,0,0)$ points
in the direction of the tangent line, 
$\vect{e}_3:=(0,0,1)$
and $\vect{e}_1$ 
span the principal plane, and
$\vect{e}_2:=(0,1,0)$ and $\vect{e}_3$ 
span the normal plane.
\end{Fact}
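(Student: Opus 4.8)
The plan is to reach \eqref{eq:west-2} by a preliminary normalization of the ambient isometry followed by two coordinate changes on the source, and then to read off the functions $a$ and $b$ and verify \eqref{eq:cb}.
\emph{Step 1 (ambient normalization and straightening the first component).}
Since the tangent line and the principal plane are independent of the choice of admissible coordinates, apply an isometric motion of $\R^3$ so that $f(0,0)=\vect{0}$, so that $f_u(0,0)$ is a positive multiple of $\vect{e}_1$, and so that the principal plane $\operatorname{span}\{f_u(0,0),f_{vv}(0,0)\}$ -- a genuine $2$-plane, since $\{f_u,f_{uv},f_{vv}\}$ is linearly independent at $(0,0)$ by Whitney's criterion -- is the $\vect{e}_1\vect{e}_3$-plane; the normal plane, being orthogonal to the tangent line, is then the $\vect{e}_2\vect{e}_3$-plane. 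Writing $f=(f^1,f^2,f^3)$, the relations $f_v(0,0)=\vect{0}$ and $f_u(0,0)\parallel\vect{e}_1$ give $f^1_u(0,0)>0$, $f^1_v(0,0)=f^2_v(0,0)=f^3_v(0,0)=0$ and $f^2_u(0,0)=f^3_u(0,0)=0$, while $f^2_{vv}(0,0)=0$ because $f_{vv}(0,0)$ lies in the principal plane. As $f^1_u(0,0)\neq 0$, the map $(u,v)\mapsto(f^1(u,v),v)$ is a local diffeomorphism near $p$; using it as a new coordinate system, still written $(u,v)$, we may assume $f=(u,g(u,v),h(u,v))$, and a short computation transporting the above relations yields $g(0,0)=g_u(0,0)=g_v(0,0)=g_{vv}(0,0)=0$, $g_{uv}(0,0)\neq0$, and $h(0,0)=h_u(0,0)=h_v(0,0)=0$, $h_{vv}(0,0)\neq0$, the two inequalities together being Whitney's criterion again.

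\emph{Step 2 (straightening the second component).}
The heart of the proof is to find a coordinate change $v\mapsto V=V(u,v)$, with $u$ left unchanged, $V(0,0)=0$ and $V_v(0,0)\neq0$, after which $g$ becomes $uV+b(V)$; one then takes $a:=h$ and recovers $b$ from the restriction to $\{u=0\}$ via $b(V(0,v))=g(0,v)$, so that $b(0)=b'(0)=b''(0)=0$ follows from $g(0,0)=g_v(0,0)=g_{vv}(0,0)=0$. The condition $g=uV+b(V)$ says exactly that along each level curve of $V$ the function $g$ is affine in $u$ with slope equal to the value of $V$; eliminating $b'(V)$ from $g_v=(u+b'(V))V_v$ and $g_u=V+(u+b'(V))V_u$ shows it is equivalent to the first-order quasilinear PDE
\[
  g_v\,V_u+(V-g_u)\,V_v=0
\]
near the origin. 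This is the step I expect to be the main obstacle: the characteristic vector field of this equation vanishes at the cross-cap point (indeed along a curve through it), so the classical method of characteristics does not apply directly. I would handle it by first constructing the Taylor series of $V$ and $b$ order by order -- matching the $n+1$ coefficients of the degree-$n$ part of $g=uV+b(V)$ gives $n+1$ linear equations for the $n+1$ unknowns formed by the degree-$(n-1)$ homogeneous part of $V$ and the $n$th Taylor coefficient of $b$, the equation for the monomial $v^n$ determining the latter because $g_{uv}(0,0)\neq0$; this has a unique solution with $V=\tfrac12 g_{uu}(0,0)\,u+g_{uv}(0,0)\,v+\cdots$, so in particular $V_v(0,0)=g_{uv}(0,0)\neq0$. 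Then I would realize this formal solution by genuine $C^\infty$ data $(V_0,b_0)$ via Borel's theorem and remove the remaining flat error $g-(uV_0+b_0(V_0))$ by a flat correction -- a division-and-iteration argument that converges because $g_{uv}(0,0)\neq0$ keeps the relevant linearized operator solvable. (Alternatively, one blows up the origin and solves the now non-degenerate problem upstairs, following the blow-up approach of \cite{FH}.)

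\emph{Step 3 (normalizations and orientation).}
In the coordinates produced by Step 2 we have $f=(u,uv+b(v),a(u,v))$ with $a:=h$; here $a(0,0)=a_u(0,0)=a_v(0,0)=0$ since all coordinate changes were centered and tangential, and $a_{vv}(0,0)=h_{vv}(0,0)\,V_v(0,0)^{-2}\neq0$. If $a_{vv}(0,0)<0$, compose with the rotation of $\R^3$ by $\pi$ about the $\vect{e}_1$-axis together with the source change $v\mapsto-v$; this preserves \eqref{eq:west-2}, replaces $b$ by $v\mapsto-b(-v)$ and $a$ by $(u,v)\mapsto-a(u,-v)$, and makes $a_{vv}(0,0)>0$, so \eqref{eq:cb} holds. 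Finally, if the coordinate system is negative with respect to the ambient coordinates $(r_1,r_2)$ of $U$, compose with the source change $u\mapsto-u$ and the rotation of $\R^3$ by $\pi$ about the $\vect{e}_3$-axis; this again preserves \eqref{eq:west-2} -- replacing $b$ by $-b$ and $a(u,v)$ by $a(-u,v)$, so \eqref{eq:cb} still holds -- and reverses the orientation. We thus obtain a positive coordinate system in which $f$ has the form \eqref{eq:west-2} and \eqref{eq:cb} holds; the asserted identification of $\vect{e}_1,\vect{e}_2,\vect{e}_3$ with the tangent line and the principal and normal planes is then immediate from \eqref{eq:west-2} together with \eqref{eq:cb}.
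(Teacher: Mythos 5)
First, a point of comparison: the paper does not prove this statement at all --- it is imported as a \emph{Fact} from \cite{BW} and \cite{west} (see also \cite{FH}, \cite{HNUY}) --- so there is no in-paper argument to measure yours against; what follows assesses your sketch on its own terms. Your Step 1 (ambient rotation, Whitney's criterion forcing $f^2_{uv}(0,0)\neq0$ and $f^3_{vv}(0,0)\neq0$, straightening the first component) and Step 3 (the two discrete fixes for the sign of $a_{vv}(0,0)$ and for the orientation, each of which visibly preserves the shape \eqref{eq:west-2} and the conditions \eqref{eq:cb}) are correct and complete. The order-by-order computation in Step 2 is also correct: at degree $n$ the system is triangular, the $v^n$-coefficient determines the $n$th Taylor coefficient of $b$ because $V_v(0,0)=g_{uv}(0,0)\neq0$, and the remaining $n$ equations determine the degree-$(n-1)$ part of $V$; this reproduces the jet-level normal forms of \cite{FH} and \cite{HHNUY-2014}.

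The genuine gap is the last clause of Step 2: ``remove the remaining flat error \dots by a division-and-iteration argument that converges.'' That sentence is the entire analytic content of the Bruce--West theorem, and it cannot be waved through. After the Borel step you have, in coordinates $(u,w)$ with $w=V_0$, the identity $g=uw+b_0(w)+\rho(u,w)$ with $\rho$ flat, and you must produce flat corrections $\sigma,\tilde b$ with
$\rho(u,w)=\sigma\bigl(u+\beta(w,\sigma)\bigr)+\tilde b(w+\sigma)$, where $\beta(w,0)=b_0'(w)$. The obstruction is that the factor $u+b_0'(w)$ you must divide by vanishes along a curve through the origin (this is the same degeneracy that makes your quasilinear PDE characteristic at $p$), so neither the implicit function theorem nor naive division by $u$ applies: for instance a flat remainder $\rho$ that is not divisible by $u+b_0'(w)$ cannot be absorbed by the obvious formula $\sigma=\rho/(u+b_0'(w))$, and whether a given choice of $\tilde b$ (say, the one read off along the critical curve $\{g_w=0\}$) leaves a divisible numerator depends delicately on that choice. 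The linearized problem is a singular ODE along the critical curve whose homogeneous solutions blow up there, so solvability and flatness of the correction, let alone convergence of a nonlinear iteration in the $C^\infty$ category, require a real argument --- this is exactly why the paper's introduction stresses that earlier references control only the Maclaurin coefficients of $a$ and $b$. To close this you would need either to carry out the singular division/integration estimates in detail or to fall back on the singularity-theoretic proof of \cite{BW} and \cite{west} (or simply cite the Fact, as the paper does).
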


The coordinate system $(u,v)$ in Fact \ref{thm:west-2}
is called a {\it Bruce-West coordinate system},
and the equation \eqref{eq:west-2}
is called the {\it normal form} of a cross cap.

The geometric invariance of the coefficients of 
Maclaurin series of $a(u,v)$ and $b(v)$
has
been pointed out and discussed in \cite{FH, HHNUY-2014, HNUY}.
However, these functions are determined up to 
additions of flat functions in general, 
and there are no references which assert that the 
functions $a(u,v)$ and $b(v)$ themselves 
are geometric invariants as far as the authors know.
The purpose of this paper is
to prove the following local rigidity theorem of 
Bruce-West's coordinates, which removes 
these ambiguities in $a(u,v)$ and $b(v)$ by flat functions:

\medskip
\noindent
{\bf Theorem A} (Rigidity of Bruce-West's coordinates).
\label{thm:W}
{\it Let $(U_i;u_i,v_i)$ $(i=1,2)$  be 
positive coordinate neighborhoods centered at
 $p_i$ in $\R^2$, and 
$f_i:U_i\to \R^3$ 
$C^\infty$-maps satisfying $f_i(0,0)=\mb 0$
each of which has a cross cap singular point at $p_i\in U_i$.
Suppose that 
$f_i$ $(i=1,2)$ are written in the following normal forms
  \begin{equation}\label{eq:west-2b}
   f_i(u_i,v_i)=
        \left(
	   u_i,
	   u_iv_i +b_i(v_i),
	   a_i(u_i,v_i)
	\right)
\end{equation}
and $f_1(U_1)\subset f_2(U_2)$.
Then there exist a pair of
neighborhoods $V_i(\subset U_i)$ of $p_i$ $(i=1,2)$
and an orientation preserving diffeomorphism $\phi:V_1\to V_2$ such that 
$$ 
f_1=f_2\circ \phi,\quad u_1=u_2\circ \phi(u_1,v_1),\quad
v_1=v_2\circ \phi(u_1,v_1).
$$ 
As a consequence, there exists a positive number $\epsilon$ such that
$$ 
a_1(u,v)=a_2(u,v),
\quad
b_1(v)=b_2(v)
$$
hold for $u,v$ satisfying $|u|, |v|<\epsilon$.}

\medskip
Regarding the above results, we give the following definition:

\begin{Def}
We call $a(u,v)$ the {\it first characteristic function}
and $b(v)$ the {\it second characteristic function}.
\end{Def}

We also give the following definition:

\begin{Def}\label{def:map-c}
Let $U_i$ $(i=1,2)$  be a neighborhood of $p_i$ in $\R^2$.
For each $i=1,2$, let
$f_i:U_i\to \R^3$ 
be a $C^\infty$-map having a cross cap singular point
at $p_i\in U_i$.
We say that $f_2$ is {\it congruent} to $f_1$
as a map germ
if there exist an isometry $T$ of $\R^3$
and a diffeomophism germ $\phi$ satisfying $\phi(p_1)=p_2$
such that $f_1=T\circ f_2 \circ \phi$
holds on a sufficiently small neighborhood of $p_1$.
Moreover, if $T$ and $\phi$ are both orientation preserving,
then $f_2$ is said to be {\it positively congruent} to $f_1$.
\end{Def}

As an application of Theorem A, we obtain the following:

\medskip
\noindent
{\bf Theorem B.}
{\it The two characteristic functions $a(u,v)$ and $b(v)$ 
can be considered as geometric invariants
of positive congruence classes on cross cap germs.
}

\medskip
Moreover, as an application,
we show the following three corollaries:

\medskip
\noindent
{\bf Corollary C-1.}\label{Cor:d-1}
{\it
Let $f:U\to \R^3$ be a $C^\infty$-map defined on
an open subset $U$ of $\R^2$ and
$p\in U$ a cross cap singular point.
Then the following three assertions 
are equivalent:
\begin{enumerate}
\item[(i${}_1$)] The image germ of $f$ at $p$ is
invariant under the reflection $T_1$
with respect to the principal plane.
\item[(ii${}_1$)]
There exist a connected open
neighborhood $W(\subset U)$ of $p$
and an involutive orientation reversing
$C^\infty$-diffeomorphism $\phi:W\to W$
such that $f\circ \phi=T_1\circ f$ on $W$.
\item[(iii${}_1$)]
The characteristic functions satisfy 
$a(u,-v)=a(u,v)$ and $b(v)=-b(-v)$. 
\end{enumerate}
}

\medskip
\noindent
{\bf Corollary C-2.}\label{Cor:d-2}
{\it Let $f:U\to \R^3$ be a $C^\infty$-map defined on
an open subset $U$ of $\R^2$ and
$p\in U$ a cross cap singular point.
Then the following three assertions 
are equivalent:
\begin{enumerate}
\item[(i${}_2$)] The image germ of $f$ at $p$ is
invariant under the 
reflection $T_2$ with respect to the normal plane.
\item[(ii${}_2$)]
There exist a connected open
neighborhood $W(\subset U)$ of $p$
and an involutive orientation preserving
$C^\infty$-diffeomorphism $\phi:W\to W$
such that $f\circ \phi=T_2\circ f$ on $W$.
\item[(iii${}_2$)]
The characteristic functions satisfy 
$a(u,v)=a(-u,-v)$ and $b(v)=b(-v)$.
\end{enumerate}
}

\medskip
\noindent
{\bf Corollary C-3.}\label{Cor:d-3}
{\it 
Let $f:U\to \R^3$ be a $C^\infty$-map defined on
an open subset $U$ of $\R^2$ and
$p\in U$ a cross cap singular point.
Then the following three assertions 
are equivalent:
\begin{enumerate}
\item[(i${}_3$)] The image germ of $f$ at $p$ is
invariant under the 
$180^\circ$-rotation $T_3$ with respect to the
normal line.
\item[(ii${}_3$)]
There exist a connected open
neighborhood $W(\subset U)$ of $p$
and an involutive 
orientation reversing 
$C^\infty$-diffeomorphism $\phi:W\to W$
such that $f\circ \phi=T_3\circ f$ on $W$.
\item[(iii${}_3$)]
The characteristic functions satisfy 
$a(-u,v)=a(u,v)$
and $b(v)=0$.
\end{enumerate}
}

\rm
\medskip
Finally, we remark that Corollary C-$j$ ($j=1,2,3$) 
can be generalized for other space forms:
We denote by $M^3(c)$ the simply connected 
complete Riemannian $3$-manifold of
constant sectional curvature $c$.
For a germ of cross cap in $M^3(c)$ ($c\ne 0$),
we can define the notions of
{\it tangent line}, {\it normal line}
{\it principal plane} and
{\it normal plane},
like as in the case of the Euclidean 3-space.
We then denote by $T_1$ (resp. $T_2$)
the reflection with respect to the principal plane (resp. normal plane).
On the other hand, let $T_3$ be the
$180^\circ$-rotation with respect to the
normal line. The following assertion holds:

\medskip
{\bf Proposition D.}
{\it Let $f:U\to M^3(c)$ be a $C^\infty$-map defined on
an open subset $U$ of $\R^2$ and
$p\in U$ a cross cap singular point.
Then 
the following three assertions
are equivalent for each $j=1,2,3$:
\begin{enumerate}
\item[(i)] The image germ of $f$ at $p$ is
invariant under the 
isometry $T_j$ of $M^3(c)$.
\item[(ii)]
There exist a connected open
neighborhood $W(\subset U)$ of $p$
and a $C^\infty$-involution 
$\phi:W\to W$
such that $f\circ \phi=T_j\circ f$ on $W$.
\item[(iii)]
The two characteristic functions $a(u,v)$ and $b(v)$
$($cf. \eqref{eq:forD}$)$
satisfy 
{\rm (iii${}_j$)} of Corollary C-${j}$.
\end{enumerate}
}

\medskip
\section{Proofs}

We first prepare a definition and a lemma:

\begin{Def}\label{def:admissible}
Let $f:U\to \R^3$ be a $C^\infty$-map with a
cross cap singular point at $p\in U$
having a unit normal vector field $\nu$ of $f$
defined on $U\setminus \{p\}$.
Then an open neighborhood $V(\subset U)$ of $p$ 
is said to be {\it admissible} if 
\begin{enumerate}
\item[(a1)] the closure $\overline{V}$ of $f$ is compact and contained in $U$,
\item[(a2)] $(f|_V)^{-1}(f(p))=\{p\}$,
where $f|_V$ is the restriction of $f$ to the subset $V$, 
\item[(a3)] 
the map $L:=(f,\nu):V\setminus \{p\}\to \R^3\times
S^2$ is an injective immersion, and
\item[(a4)]
the set 
\begin{equation}\label{eq:Vstar0}
A:=\Big\{q\in V \setminus \{p\} \,;\, 
\exists q'\in V\setminus \{p,q\} \text{ such that }
 f(q)=f(q') \Big\}\cup \{p\}
\end{equation}
is the image of a  regular curve in $V$  
passing through $p$
such that $V\setminus A$ consists of two
connected components.
\end{enumerate}
\end{Def}

\begin{Lemma}\label{lem:l2}
Let 
$f:U\to \R^3$ be a $C^\infty$-map with a
cross cap singular point at $p\in U$.
Then there exists an
admissible neighborhood $V(\subset U)$ 
of $p$.
\end{Lemma}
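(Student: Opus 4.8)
The plan is to put $f$ into the normal form of Fact~\ref{thm:west-2} and then verify the four requirements (a1)--(a4) of Definition~\ref{def:admissible} in turn, the real work lying in the injectivity half of (a3). Since whether a neighborhood is admissible is unaffected by post-composing $f$ with an isometry of $\R^3$ (which carries $\nu$ along equivariantly and preserves the self-intersection set) and by a $C^\infty$-coordinate change of the source (which moves the self-intersection set diffeomorphically), I may assume by Fact~\ref{thm:west-2} that $f(u,v)=\bigl(u,\,uv+b(v),\,a(u,v)\bigr)$ on a neighborhood $U$ of $p=(0,0)$ with $a,b$ as in \eqref{eq:cb}; then $f(p)=\mb 0$. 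A first remark is that $p$ is the only singular point of $f$ near $p$: from $f_u=(1,v,a_u)$, $f_v=(0,u+b'(v),a_v)$ the singular set is $\{u+b'(v)=0\}\cap\{a_v(u,v)=0\}$, and at the origin these two curves are mutually transverse (the first is tangent to the $v$-axis since $b''(0)=0$, while the second is transverse to it since $a_{vv}(0,0)\neq 0$), so after shrinking $U$ they meet only at $p$. Consequently $\nu$ is a well-defined $C^\infty$-field on $U\setminus\{p\}$ and, for every neighborhood $V$, $L=(f,\nu)$ is automatically an immersion on $V\setminus\{p\}$; moreover $f(u,v)=\mb 0$ forces $u=0$ and then $a(0,v)=\tfrac12a_{vv}(0,0)v^2+O(v^3)$, which vanishes near $v=0$ only at $v=0$, so (a2) holds on any sufficiently small $V$ and (a1) after one more shrinking. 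It remains to establish (a4) and that $L$ is injective.

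For (a4) I would study the self-intersection set. Since the first component of $f$ is $u$, one has $f(u_1,v_1)=f(u_2,v_2)$ only if $u_1=u_2=:u$; writing the divided differences $\tilde b(v_1,v_2)=\bigl(b(v_1)-b(v_2)\bigr)/(v_1-v_2)$ and $\tilde a(u,v_1,v_2)=\bigl(a(u,v_1)-a(u,v_2)\bigr)/(v_1-v_2)$, which are $C^\infty$ by Hadamard's lemma and restrict on the diagonal to $b'$ and $a_v$, the coincidence condition for $v_1\neq v_2$ is $G_1:=u+\tilde b=0$, $G_2:=\tilde a=0$. Using $b''(0)=0$ and $a_{vv}(0,0)\neq 0$ one computes $dG_1=du$ and $dG_2=a_{uv}(0,0)\,du+\tfrac12a_{vv}(0,0)(dv_1+dv_2)$ at the origin, which are linearly independent; hence $D:=\{G_1=G_2=0\}$ is near $0$ a regular curve in $(u,v_1,v_2)$-space, tangent to $\{u=0,\,v_1+v_2=0\}$, parametrizable as $t\mapsto(u(t),v_1(t),v_2(t))$ with $u(t)=O(t^2)$, $v_1(t)=t+O(t^2)$, $v_2(t)=-t+O(t^2)$. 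Because $D\cap\{v_1=v_2\}$ near $0$ is the (isolated) singular set, every point of $D\setminus\{0\}$ has $v_1\neq v_2$, so the self-intersection set of $f$ near $p$ is exactly $A=\{q(t):|t|<\epsilon\}$ with $q(t):=(u(t),v_1(t))$ (and $q(0)=p$), a regular arc through $p$; taking $V$ to be a small disk on which $A\cap V$ is this arc gives (a4), and records that the unique point of $V$ distinct from $q(t)$ with the same image is $\sigma(t):=(u(t),v_2(t))$.

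The heart of the matter is injectivity of $L$ on $V\setminus\{p\}$. Since $f$ is injective off $A$, and on $A$ the only coincidences are the pairs $\{q(t),\sigma(t)\}$, this reduces to showing $\nu(q(t))\neq\nu(\sigma(t))$ for small $t\neq 0$, i.e. that the tangent planes $T_{q(t)}$ and $T_{\sigma(t)}$ are distinct. Differentiating $f(q(t))=f(\sigma(t))$ shows the vector $w(t):=u'(t)f_u(q(t))+v_1'(t)f_v(q(t))$ lies in both planes, and since $v_1'(0)\neq 0$ one has $T_{q(t)}=\operatorname{span}\{w(t),f_u(q(t))\}$ and $T_{\sigma(t)}=\operatorname{span}\{w(t),f_u(\sigma(t))\}$; hence the planes coincide exactly when
\[
\Delta(t):=\det\bigl[f_u(q(t)),\,f_u(\sigma(t)),\,f_v(q(t))\bigr]=0 .
\]
Expanding this determinant with $f_u(u,v)=(1,v,a_u(u,v))$, $f_v(u,v)=(0,u+b'(v),a_v(u,v))$ gives
\[
\Delta(t)=(v_2-v_1)\,a_v(u,v_1)-\bigl(u+b'(v_1)\bigr)\bigl(a_u(u,v_2)-a_u(u,v_1)\bigr)
\]
with $u=u(t)$, $v_i=v_i(t)$, and tracking orders in $t$ along $A$ (using $u(t)=O(t^2)$, $b'(v)=O(v^2)$, $v_2-v_1=-2t+O(t^2)$ and $a_v(u(t),v_1(t))=a_{vv}(0,0)\,t+O(t^2)$) the first term is $-2a_{vv}(0,0)\,t^2+O(t^3)$ and the second is $O(t^3)$, so $\Delta(t)=-2a_{vv}(0,0)\,t^2+O(t^3)$. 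Since $a_{vv}(0,0)>0$, this is nonzero for $0<|t|<\epsilon'$, so $L$ separates every self-intersection pair; together with injectivity of $f$ off $A$, $L|_{V\setminus\{p\}}$ is injective for $V$ small. Intersecting this with the shrinkings already made yields an admissible $V$.

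I expect the genuinely delicate point to be precisely this transversality estimate and the order-bookkeeping in $t$ that supports it: it is the only place where the nondegeneracy $a_{vv}(0,0)>0$ of the cross cap enters, and it is exactly what forbids the normals of the two local sheets from agreeing along the double-point curve. Everything else is either immediate from the normal form or a standard consequence of the cross cap being an isolated singular point with a one-dimensional self-intersection locus.
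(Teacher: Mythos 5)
Your proof is correct, but it takes a genuinely different route from the paper's. The paper does not work with the Bruce--West normal form at all here: it uses the fact that a cross cap germ is right-left equivalent to the standard model $f_0(u,v)=(u,uv,v^2)$, reads off everything explicitly for $f_0$ (the double-point set is the $v$-axis, the unit normal $\nu_0$ is given in closed form, and the transversal meeting of the two sheets along the double-point curve is observed directly from $\nu_0$), and then transports (a1)--(a4) back through the source and target diffeomorphisms, the key remark being that transversality of the two sheets is a diffeomorphism-invariant condition. You instead reduce only by an isometry of $\R^3$ (so the normal field transfers trivially), locate the double-point curve of a general normal form via Hadamard divided differences, and prove the sheet-transversality by the quantitative estimate $\Delta(t)=-2a_{vv}(0,0)t^2+O(t^3)$. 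Your version is more computational but self-contained and yields extra information (the double-point curve is tangent to the $v$-axis with $u=O(v^2)$, and the angle between the sheets degenerates to second order, controlled by $a_{vv}(0,0)$), which meshes well with the invariants the paper studies; the paper's version is shorter because all the work is absorbed into the explicit model. One small point you should tidy: the set $A$ in (a4) is defined relative to $V$ (the partner $q'$ must also lie in $V$), so for a round disk a point $q(t)$ near $\partial V$ may lose its partner $\sigma(t)$ and drop out of $A$; you need to choose $V$ (e.g.\ adapted to the arc, or shrunk so that $\sigma$ maps $A\cap V$ into $V$) so that the relative double-point set is still a properly embedded arc separating $V$. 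The paper avoids this because for $f_0$ the involution $v\mapsto -v$ preserves the round disk exactly; in your setting it is only an $O(t^2)$ perturbation of that, so the fix is routine but should be said.
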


\begin{proof}
The standard cross cap  $f_0:\R^2\to \R^3$ is defined by
$
f_0(u,v):=(u,uv,v^2)
$
(see Figure \ref{fig:cr-explain}, left).
Since $f$ as a map germ at $p$
is right-left equivalent to the standard cross cap,
there exist
\begin{itemize}
\item an open neighborhood $U_0(\subset \R^2)$ of the origin $(0,0)$,
\item a $C^\infty$-map $\phi:U_0\to \R^2$ 
giving a
diffeomorphism between $U_0$ and $\phi(U_0)(\subset U)$,
\item an open neighborhood $\Omega(\subset \R^3)$ of $f(p)$ and
\item a $C^\infty$-map $\Phi:(\Omega,f(p))\to (\R^3,\mb 0)$
giving a diffeomorphism between $\Omega$ and $\Phi(\Omega)$ 
\end{itemize}
such that
$\Phi\circ f\circ \phi=f_0$ holds on $U_0$.
Then the closure of the open disk $D_r$ of radius $r$ centered at the origin
is contained in $U_0$ for sufficiently small $r(>0)$.
We fix such an $r$ and set $V_0:=D_r$.
Since
$f_0^{-1}(f_0(0,0))=\{(0,0)\}$, the map $f$ satisfies (a1) and (a2).
On the other hand, we consider the regular curve parametrizing the $v$-axis
$
\gamma_0(v):=\{(0,v)\in V_0\,;\, v\in \R\}
$,
whose image is the closure of the self-intersection set of $f_0$ in $V_0$,
because $f_0(0,v)=f_0(0,-v)$.
Moreover, 
\begin{equation}\label{eq:n0}
\nu_0(u,v):=
\frac{1}{\sqrt{u^2+4 v^2+4 v^4}}
\left(2 v^2,-2 v,u\right) \qquad ((u,v)\in \R^2\setminus \{(0,0)\})
\end{equation}
is a unit normal vector field of $f_0$ on $\R^2\setminus \{(0,0)\}$.
By computing $\nu_0$ at each point of $A_0\setminus \{(0,0)\}$, 
it can be easily observed that, 
at each point $q_0\in A_0\setminus \{(0,0)\}$, two sheets of $f_0$ at $f_0(q_0)$
meet transversally.
We set $V:=\phi(V_0)$.
Since $\Phi\circ f\circ \phi=f_0$,
the set (as the image of the regular curve $\Phi\circ \gamma_0$)
$
A:=\phi(A_0)
$
coincides with the set given in
\eqref{eq:Vstar0}. So (A) satisfies (a4).
Moreover, 
at each point $q\in A\setminus \{p\}$, two sheets of $f$ at $f(q)$
meet transversally, as well as $f_0$. 
So the map $L$ satisfies (a3).
\end{proof}

\begin{Corollary}\label{cor:VU}
Let $f$ be as above,
and let $V(\subset U)$ be an admissible neighborhood 
of $p$.
If
$D_{\epsilon}(p)$ is the open disk of radius $\epsilon(>0)$
centered at $p$ satisfying
$D_{\epsilon}(p)\subset V$,
then $L=(f,\nu)$ gives a homeomorphism
between $B:=\overline{V}\setminus D_{\epsilon}(p)$ 
and $L(B)$.
Moreover, by setting $O:=V\setminus \overline{D_{\epsilon}(p)}$,
the restriction $L|_{O}$ of $L$ to the open subset $O$
is an embedding.
\end{Corollary}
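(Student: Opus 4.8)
The plan is to derive both assertions from the single fact that \emph{$L$ is injective on the compact set $B$}. Granting this, the first assertion is immediate: $B=\overline V\setminus D_\epsilon(p)$ is a closed subset of the compact set $\overline V$, and $p\notin B$ since $p\in D_\epsilon(p)$, so by (a1) the map $L=(f,\nu)$ is defined and continuous on $B$; a continuous injection from a compact space into the Hausdorff space $\R^3\times S^2$ is a homeomorphism onto its image. For the second assertion, observe that $O=V\setminus\overline{D_\epsilon(p)}$ is an open subset of $V\setminus\{p\}$, so $L|_O$ is an injective immersion by (a3); moreover $\overline O$ is disjoint from the open set $D_\epsilon(p)$ (because $O$ is), and $\overline O\subset\overline V$, hence $\overline O\subset\overline V\setminus D_\epsilon(p)=B$. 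Thus, once $L|_B$ is injective, $L|_{\overline O}$ is a homeomorphism onto its image, and so is its restriction $L|_O$; together with the immersion property this makes $L|_O$ an embedding.

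It remains to prove that $L|_B$ is injective. Suppose $q_1,q_2\in B$ satisfy $q_1\ne q_2$ and $L(q_1)=L(q_2)$, i.e.\ $f(q_1)=f(q_2)$ and $\nu(q_1)=\nu(q_2)$. If both $q_1$ and $q_2$ lie in $V$, then both lie in $V\setminus\{p\}$ (as $p\in D_\epsilon(p)$), and the equalities $f(q_1)=f(q_2)$, $\nu(q_1)=\nu(q_2)$ contradict the injectivity of $L$ on $V\setminus\{p\}$ guaranteed by (a3). Hence we may assume $q_1\in\partial V=\overline V\setminus V$, and the remaining task is to rule out this case.

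For this I would appeal to the standard cross cap model exactly as in the proof of Lemma~\ref{lem:l2}: there $f$ is identified, via diffeomorphisms $\phi$ and $\Phi$, with $f_0(u,v)=(u,uv,v^2)$ on suitable neighborhoods, with $\Phi\circ f\circ\phi=f_0$. Arranging $V$ small enough, or taking for $V$ one of the admissible neighborhoods produced by Lemma~\ref{lem:l2}, so that $\overline V$ lies in the domain $\phi(U_0)$ of that identification, we set $\tilde q_i:=\phi^{-1}(q_i)$; then $f_0(\tilde q_1)=f_0(\tilde q_2)$ with $\tilde q_1\ne\tilde q_2$ forces, by a direct computation with $f_0$, that $\tilde q_1=(0,t)$ and $\tilde q_2=(0,-t)$ for some $t\ne 0$ (in particular $f(q_1)\ne f(p)$, so $q_1=p$ does not occur). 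At such a pair the two image sheets of $f_0$ meet transversally, as already observed in the proof of Lemma~\ref{lem:l2}, and transversality is preserved by the diffeomorphism $\Phi^{-1}$ carrying the image of $f_0$ to that of $f$; hence the two sheets of the image of $f$ through $f(q_1)=f(q_2)$ have distinct tangent planes, and therefore distinct unit normals up to sign. This contradicts $\nu(q_1)=\nu(q_2)$, finishing the proof. The point-set steps (compactness of $B$ and the two topological reductions) are routine; the step requiring real care — the main obstacle — is this last one, since admissibility only posits injectivity of $L$ on the \emph{open} set $V\setminus\{p\}$, so controlling $L$ at boundary points of $V$ genuinely needs the local cross cap normal form together with the transversality of its self-intersection sheets extracted in Lemma~\ref{lem:l2}.
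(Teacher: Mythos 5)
Your proposal is correct and, for the two topological assertions, follows the same route as the paper: compactness of $B$ plus the closed-map argument shows that the continuous injection $L|_B$ is a homeomorphism onto its image, and then $O\subset B$ together with the immersion property from (a3) upgrades $L|_O$ to an embedding. The genuine difference is that you isolate and actually prove the injectivity of $L$ on $B$, which is the one point the paper's proof passes over in silence: condition (a3) only gives injectivity of $L$ on the open set $V\setminus\{p\}$, whereas $B$ also contains points of $\overline V\setminus V$, and the paper simply asserts that $L|_B\colon B\to L(B)$ is a homeomorphism. Your fix --- reducing to the model $f_0(u,v)=(u,uv,v^2)$, computing that a double point forces $\tilde q_1=(0,t)$ and $\tilde q_2=(0,-t)$ with $t\ne 0$, and invoking the transversality of the two sheets (preserved under the ambient diffeomorphism) to conclude that the unit normals at $q_1$ and $q_2$ are not even parallel --- is sound. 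The one caveat is that this step needs $\overline V$ to lie inside the normal-form chart $\phi(U_0)$; that containment is not literally part of Definition \ref{def:admissible}, so as written you prove the corollary for the admissible neighborhoods produced by Lemma \ref{lem:l2} (where $\overline V=\phi(\overline{D_r})\subset\phi(U_0)$) rather than for an arbitrary admissible $V$. Since those are the only neighborhoods used in the sequel, this is harmless, but you should either record the containment as an additional hypothesis or shrink $V$ explicitly at the start of your boundary argument.
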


\begin{proof}
Since $B$ is compact,
$
L|_{B}:
B
\to L(B)
$
is a homeomorphism, because of the fact that 
a continuous map from a compact space to a
Hausdorff space is a closed map.
Since $O$ is a subset of $\overline{W} \setminus D_{\epsilon}$,
the map $L|_{O}$ gives a homeomorphism between $O$ and $L(O)$.
Since $L$ is an injective immersion on $V$,
we can conclude that $L|_{O}$ is an embedding.
\end{proof}

We let $(U_i;u_i,v_i)$ $(i=1,2)$  be a coordinate neighborhood centered at
$p_i$ in $\R^2$ and 
$f_i:U_i\to \R^3$ a $C^\infty$-map satisfying $f_i(0,0)=\mb 0$
which has a cross cap singular point at $p_i\in U_i$.
Suppose that 
$f_i$ $(i=1,2)$ are written in the normal forms
and $f_1(U_1)$ is a subset of $f_2(U_2)$.
By Lemma \ref{lem:l2}, for each $i=1,2$,
we can take an admissible neighborhood $V_i(\subset U_i)$ of $p_i$.
Since the canonical form $f_0:\R^2\to \R^3$ is a proper map,
by \cite[Corollary 1.15]{HNSUY}, we may assume that
$(f_i|_{V_i})^{-1}(f_i(p_i))=\{p_i\}$  and $f_i$ is
$V_i$-proper in the sense of \cite[Definition 1.1]{HNSUY}
for each $i=1,2$.
Then, by \cite[Theorem 1.16]{HNSUY},
the condition $f_1(U_1)\subset f_2(U_2)$ implies that
there exist relatively compact neighborhoods $W_i(\subset V_i)$ ($i=1,2$)
such that $\overline{W_i}\subset V_i$ and
and $f_1(W_1)\subset f_2(W_2)$.
Again, applying Lemma \ref{lem:l2},
we take an admissible neighborhood $W'_1$ of $p_1$
such that $\overline{W'_1}\subset W_1$.
Here, we replace $W_1$ by $W'_1$ and
$W_2$ by $V_2$, and then obtain
admissible neighborhoods $W_i$ ($i=1,2$) of $p_i$
satisfying 
$$
\overline{W_1}\subset U_1,\quad
\overline{W_2}\subset U_2,\quad
f_1(W_1)\subset f_2(W_2).
$$
For each $i\in \{1,2\}$, we set
\begin{equation}\label{eq:Vstar}
A_i:=\Big\{q\in W_i\setminus \{p_i\} 
\,;\, \exists q'\in W_i\setminus \{p_i,q\} \text{ such that }
f_i(q)=f_i(q') \Big\}\cup \{p_i\}.
\end{equation}
Since $W_i$ ($i=1,2$)
is admissible,
(a4) in Definition \ref{def:admissible}
 implies that
$W_i\setminus A_i$ consists of two
connected components, denoted by $W'_i$ and $W''_i$.
Since
$f_1(W_1)\subset f_2(W_2)$,
it is obvious that $f_1(A_1)\subset f_2(A_2)$.
So we may assume that
$f_1(W'_1)\subset f_2(W'_2)$
and $f_1(W''_1)\subset f_2(W''_2)$.
We then replace by $\nu_2$ by $-\nu_2$ if necessary,
and may also assume that
$
L_1(W'_1)\subset L_2(W'_2).
$
Since the inward (resp. the outward)
normal vector field $\nu_i$ of $f_i$ on $W'_i$
turns to be an outward (resp. an inward)
normal vector field $\nu_i$ on $W''_i$,
the fact $L_1(W'_1)\subset L_2(W'_2)$ implies
$L_1(W''_1)\subset L_2(W''_2)$.
(For example, for the standard cross cap $f_0$,
$\nu_0(u,0) = (0,0,1)$ (resp. $\nu_0(u,0) = (0,0,-1)$)
given in \eqref{eq:n0}
is the inward (resp. outward) normal vector when $u>0$ (resp. $u<0$), see 
Figure \ref{fig:cr-explain}, left.)
So we have
\begin{equation}\label{eq:L1L2}
L_1(W^*_1)\subset L_2(W^*_2),
\end{equation}
where
$
W^*_i:=W_i\setminus \{p_i\}.
$
Since $L_2$ is injective (cf.~(a3)),
the map $\phi:=L_2^{-1}\circ L_1$ from $W_1^*$ to
$W_2^*$ is defined.
The following assertion holds:

\begin{Lemma}
The set $\phi(W_1^*)$ is open in $W_2^*$,
and $\phi$ gives a diffeomorphm between $W^*_1$ and $\phi(W_1^*)$.
\end{Lemma}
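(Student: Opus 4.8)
The plan is to reduce everything to the statement that $L_2$ restricts to an \emph{embedding} of $W_2^*$, i.e.\ that $L_2^{-1}\colon L_2(W_2^*)\to W_2^*$ is continuous. Granting this, the map $\phi=L_2^{-1}\circ L_1$ is continuous by \eqref{eq:L1L2}; smoothness and the local diffeomorphism property then follow from the local structure of immersions and the identity $L_1=L_2\circ\phi$. To establish the embedding property I would use the proof of Lemma~\ref{lem:l2}: $\overline{W_2}$ lies in a coordinate domain on which $f_2$ is right--left equivalent to the standard cross cap $f_0$, so on that domain $f_2$ has only transversal double points as self-intersections and takes the value $\mb 0$ only at $p_2$. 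Since two transversally meeting sheets have distinct tangent planes, hence distinct unit normals, $L_2=(f_2,\nu_2)$ is injective on the compact set $\overline{W_2}\setminus\{p_2\}$. Continuity of $L_2^{-1}$ then follows by a subsequence argument: if $y_n\to y$ in $L_2(W_2^*)$ and $q_n:=L_2^{-1}(y_n)$, any accumulation point $q^*\in\overline{W_2}$ of $(q_n)$ satisfies $q^*\ne p_2$ (otherwise $f_2$ would vanish at $L_2^{-1}(y)\in W_2^*$), so $L_2(q^*)=y=L_2(L_2^{-1}(y))$ by continuity of $L_2$ on $\overline{W_2}\setminus\{p_2\}$, whence $q^*=L_2^{-1}(y)$ by the injectivity just noted; thus $q_n\to L_2^{-1}(y)$.

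Once $\phi$ is known to be continuous, I would prove $\phi\in C^\infty$ locally. Fix $q_1\in W_1^*$ and put $q_2:=\phi(q_1)$. By the local normal form for immersions, there is a $C^\infty$-submersion $\rho_2$ from a neighborhood $\Omega_2$ of $L_2(q_2)$ in $\R^3\times S^2$ onto a neighborhood $N_2\subset W_2^*$ of $q_2$, with $L_2(N_2)\subset\Omega_2$ and $\rho_2\circ L_2=\mathrm{id}$ on $N_2$. On the open set $N_1:=\phi^{-1}(N_2)\cap L_1^{-1}(\Omega_2)$ (a neighborhood of $q_1$, by continuity of $\phi$ and $L_1$) we have $L_1=L_2\circ\phi$ and $\phi(N_1)\subset N_2$, hence $\phi|_{N_1}=\rho_2\circ L_1|_{N_1}$ is $C^\infty$.

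Finally, differentiating $L_1=L_2\circ\phi$ gives $d(L_1)_q=d(L_2)_{\phi(q)}\circ d\phi_q$; since $d(L_1)_q$ and $d(L_2)_{\phi(q)}$ are injective, so is $d\phi_q$, hence it is a linear isomorphism and $\phi$ is a local diffeomorphism. Moreover $\phi$ is injective, since $\phi(q)=\phi(q')$ implies $L_1(q)=L_2(\phi(q))=L_2(\phi(q'))=L_1(q')$, hence $q=q'$ by (a3) of Definition~\ref{def:admissible}. An injective local diffeomorphism is an open map onto its image, so $\phi(W_1^*)$ is open in $W_2^*$ and $\phi\colon W_1^*\to\phi(W_1^*)$ is a diffeomorphism.

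The main obstacle is the very first step, namely that $\phi$ is continuous. This is exactly where the cross cap geometry is needed: the two sheets of $f_2$ meeting along its double-point curve are transversal, so lifting by the unit normal $\nu_2$ genuinely separates them and prevents $L_2^{-1}$ (and hence $\phi$) from jumping between the two sheets. All subsequent steps are routine applications of the inverse function theorem and the standard local form of immersions.
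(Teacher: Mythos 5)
Your argument is correct, and the one genuinely non-routine ingredient is the same as in the paper: compactness of $\overline{W_2}$ together with (a2) is used to rule out preimages under $L_2$ accumulating at $p_2$, after which everything reduces to standard immersion theory. The packaging, however, differs. The paper exhausts $W_1^*$ by compact annuli $B_1=\overline{W_1}\setminus D_{\epsilon_1}(p_1)$, chooses $\epsilon_2$ depending on $\epsilon_1$ so that $L_1(B_1)\subset L_2(B_2)$, invokes Corollary~\ref{cor:VU} (continuous bijection from a compact space to a Hausdorff space) to get that $\phi|_{B_1}$ is a homeomorphism onto its image, and then uses invariance of domain for openness. You instead prove directly that $L_2|_{W_2^*}$ is an embedding by a sequential argument on accumulation points in $\overline{W_2}$, and replace invariance of domain by the more elementary observation that an injective local diffeomorphism is open onto its image; this avoids the annulus exhaustion altogether and is arguably cleaner. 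Two small points: $\overline{W_2}\setminus\{p_2\}$ is \emph{not} compact (only $\overline{W_2}$ is), though your argument never actually uses that mislabel; and when the accumulation point $q^*$ lands on $\partial W_2$ you need injectivity of $L_2$ on $\overline{W_2}\setminus\{p_2\}$ rather than just (a3) on $W_2^*$ --- your transversality remark supplies this, and the paper's Corollary~\ref{cor:VU} makes the same implicit extension to the closure, so this is not a gap relative to the paper's own standard of rigor.
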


\begin{proof}
We let $\epsilon_i$ $(i=1,2)$ be sufficiently small positive numbers.
We set
$$
O_i:=W_i \setminus \overline{D_{\epsilon_i}(p_i)},\quad 
B_i:=\overline{W_i} \setminus D_{\epsilon_i}(p_i) 
\qquad (i=1,2).
$$
We can choose  
$\epsilon_2>0$ so that
$$
L_1(O_1)\subset L_2(O_2), \qquad L_1(B_1)\subset L_2(B_2).
$$
In fact, suppose the first (resp. the second) equality fails
for arbitrary $\epsilon_2(>0)$.
By regarding \eqref{eq:L1L2},
there exist $q_n\in O_1$ (resp. $q_n\in B_1$)
and $q'_n\in D_{1/n}(p_2)$ (resp. $q'_n\in \overline{D_{1/n}(p_2)}$)
such that
$
L_1(q_n)=L_2(q'_n).
$
Since $B_1$ is compact,
we can take a convergent subsequence
$\{q_{i_n}\}_{n=1}^\infty$ of $\{q_n\}_{n=1}^\infty$ converging
to a point $q_\infty\in B_1$.
Since $\lim_{n\to \infty}q'_n=p_2$, we have that
$$
f_1(q_\infty)=\lim_{n\to \infty}f_1(q_{i_n})=\lim_{n\to \infty}f_2(q'_{n})=\mb 0,
$$
and the fact $f_1^{-1}(f_1(p_1))=\{p_1\}$ implies
$q_\infty=p_1$, which contradicts that $q_\infty\in B_1$.
Thus, $L_1(O_1)\subset L_2(O_2)$ (resp. $L_1(B_1)\subset L_2(B_2)$)
holds for some $\epsilon_2$.

By the first assertion of Corollary \ref{cor:VU}, each
$L_i|_{B_i}$ ($i=1,2$) gives a homeomorphism
between $B_i$ and $L_i(B_i)$, and so,
$\phi|_{B_1}=L^{-1}_2\circ L_1$ is 
an injective continuous map from
the compact set $B_1$
to the Hausdorff space $B_2$.
So $\phi|_{B_1}$ gives a homeomorphism between $B_1$ and $\phi(B_1)$.
Since $O_1$ is an open subset of $\R^2$,
by the invariance of domain, $O'_1:=\phi(O_1)$ is
also an open subset of $O_2$.
Thus, $\phi$ gives a homeomorphism between $O_1$ and $O'_1$.
By the second assertion of Corollary \ref{cor:VU}, 
$L_i|_{O_i}$ ($i=1,2$) are embeddings, and so,
$\phi|_{O_1}$ gives a diffeomorphsm between $O_1$ and $O'_1$.
Since $\epsilon_1(>0)$ can be arbitrarily chosen, 
we can conclude that
$\phi(W_1^*)$ is an open subset of $W_2$, and
$\phi$ is a diffeomorphism 
between $W_1^*$ and $\phi(W_1^*)$.
\end{proof}

Under the preparation above, we prove Theorem A:

\begin{proof}[Proof of Theorem A]
Let $\{q_n\}_{n=1}^\infty$ be a sequence in 
$W_1 \setminus \{p_1\}$ converging to $p_1\in W_1$.
Then $Q_n:=f_1(q_n)$ converges to the origin $\mb 0$ 
by the continuity of $f_1$.
Since $f_1(\overline{W_1})\subset f_2(\overline{W_2})$,
we can take a point $q'_n$ on $\bar W^*_2(\subset \overline{W_2})$ such that
$
f_2(q'_n)=Q_n.
$
Since $\overline{W_2}$ is compact,
$\{q'_n\}$ has an accumulation point 
$q'_\infty\in \overline{W_2}$,
and the continuity of $f_2$ yields
$$
\mb 0=\lim_{n\to \infty}Q_n=\lim_{n\to \infty}f_2(q'_n)=
 f_2(q'_\infty).
$$
By (a2), we have
$q'_\infty=p_2$, which implies that $\phi$ can be uniquely
extended to a continuous map $\phi:W_1\to \phi(W_1)$
satisfying
\begin{equation}\label{eq:p1p2A}
\phi(p_1)=p_2.
\end{equation}
So $f_1=f_2\circ \phi$ holds on $W_1$.
Since $(u_i,v_i)$ ($i=1,2$) are positive coordinate systems,
$\phi$ is orientation preserving.
We set
$$
u:=u_1,\qquad v:=v_1
$$
and
$$ 
x(u,v):=u_2\circ \phi(u,v),\qquad
y(u,v):=v_2\circ \phi(u,v).
$$
Since $(u_i,v_i)$ ($i=1,2$) are both Bruce-West's coordinates,
$f_1=f_2\circ \phi$ implies that
\begin{align}
\label{eq:W1} & x(u,v)=u, \\
\label{eq:W2} & x(u,v)y(u,v)+b_2(y(u,v))=uv+b_1(v)
\end{align}
hold for $(u,v)\in W_1$.
To obtain the conclusion, it is sufficient to
show that $y(u,v)=v$ on a certain open neighborhood $O(\subset W_1)$ of $(0,0)$.
(In fact, suppose that it is true. By setting 
$\psi:=(u_2,v_2)$,
the map $\psi\circ \phi$ coincides with the identity map on $W_1$.
Since $\psi$ is a local coordinate system of $W_2$,
it is a local diffeomorphism, and so,
$\phi(=\psi^{-1})$ is the desired local diffeomorphism
at the origin.

By \eqref{eq:W1} and \eqref{eq:W2}, we have
\begin{equation}
\label{eq:W3} 
uy(u,v)+b_2(y(u,v))=uv+b_1(v).
\end{equation}
We set
$$
b_1(v):=v^3 \beta_1(v), \qquad  b_2(y):=y^3\beta_2(y).
$$
It is well-known that, for any $C^\infty$-function 
$A(x)$ of one variable
defined on an open interval containing $x=0$,
there exists a $C^\infty$-function $B(x,y)$ defined on a 
neighborhood of $(0,0)$ satisfying $A(y)-A(x)=
(y-x)B(x,y)$
on the neighborhood. So there exist $C^\infty$-functions
$\xi(w_1,w_2)$ and $\eta(w_1,w_2)$ defined on 
a neighborhood of $(w_1,w_2)=(0,0)$
such that
\begin{equation}\label{eq:B1B2}
b_2(w_2)-b_2(w_1)=(w_2-w_1)\xi(w_1,w_2), \quad 
\beta_2(w_2)-\beta_2(w_1)=(w_2-w_1)\eta(w_1,w_2). 
\end{equation}
We set
\begin{equation}\label{eq:ZG}
\zeta(u,v):=\xi(v,y(u,v)), \qquad G(u,v):=u+\zeta(u,v),
\end{equation}
which are defined on a rectangular domain
$$
D:=I\times J (\subset W_1)
$$
containing $(0,0)$, where $I$ and $J$ are
open intervals containing the origin $0\in \R$.
By \eqref{eq:W3},
we have
\begin{equation}\label{eq:uv}
b_2(y(u,v))-b_1(v)=
-u(y(u,v)-v).
\end{equation}
On the other hand, we have that
\begin{align}
\label{eq:W4} 
(v-y(u,v))G(u,v)
&=uv-uy(u,v)+(v-y(u,v))\xi(v,y(u,v)) \\ \nonumber
&=\Big(b_2(y(u,v))-b_1(v)\Big)+\Big(b_2(v)-b_2(y(u,v))\Big) 
\nonumber
 \\
&=b_2(v)-b_1(v). \nonumber
\end{align}
By \eqref{eq:uv} and \eqref{eq:W4}, we have
\begin{equation}\label{eq:add}
(y(u,v)-v)\zeta(u,v)
=b_2(y(u,v))-b_2(v),
\end{equation}
which implies 
\begin{align*}
(y(u,v)-v)\zeta(u,v)
&=b_2(y(u,v))-b_2(v) \\
&=y(u,v)^3 \beta_2(y(u,v))-v^3 \beta_2(v) \\
&=y(u,v)^3\Big(\beta_2(y(u,v))- \beta_2(v)\Big)
+\beta_2(v)\Big(y(u,v)^3-v^3\Big) \\
&=y(u,v)^3\Big((y(u,v)-v)\eta(v,y(u,v))\Big)
+\beta_2(v)\Big(y(u,v)^3-v^3\Big)
\end{align*}
on $D$.
So, under the assumption that  $y(u,v)\ne v$, we have
\begin{equation}\label{eq:W5}
\zeta(u,v)=y(u,v)^3\eta \Big(v,y(u,v)\Big)+\beta_2(v)
\Big(y(u,v)^2+y(u,v)v+v^2\Big).
\end{equation}

We now suppose that Theorem A fails.
Then there exists a sequence $\{(u_n,v_n)\}_{n=1}^\infty$
on $D$ 
satisfying
$
y(u_n,v_n)\ne v_n
$
and $\dy{\lim_{n\to \infty}}(u_n,v_n)=(0,0)$.
Since $y(0,0)=0$ (cf. \eqref{eq:p1p2A}),
the equality \eqref{eq:W5} implies
that
$$
\lim_{n\to \infty}\zeta_u(u_n,v_n)=0.
$$
So we may assume that
$G_u(u_n,v_n)\ne 0$ holds for sufficiently large $n$.
We fix such an $n$.
Since $(u_n,v_n)\ne (0,0)$,
by the implicit function theorem,
there exist a neighborhood $O(\subset D)$ 
of $(u_n,v_n)$ 
and a $C^\infty$-function $u:=u(v)$ defined on 
an open interval $J_n$ containing $v_n$
such that
$u(v_n)=u_n$ and 
$$
Z:=\{(u(v),v)\,;\, v\in J_n\} = \{(u,v)
\in O\,;\, G(u,v)=0\}.
$$
Substituting $u=u(v)$ to \eqref{eq:W4},
we have
$$
0=(v-y(u(v),v))G(u(v),v)=b_2(v)-b_1(v),
$$
and so
$b_1=b_2$ holds on $J_n$.
Since $G(u,v)\ne 0$ on $O\setminus Z$,
\eqref{eq:W4} implies that
$
v-y(u,v)=0 
$.
Then, by the continuity of $y$, we have
$$
v-y(u,v)=0 \qquad ((u,v)\in O),
$$
contradicting the fact $y(u_n,v_n)\ne v_n$.
\end{proof}

\begin{proof}[Proof of Theorem B]
Let $f_i$ ($i=1,2$) be two $C^\infty$-maps
defined on a neighborhood $U$ 
of the origin $(0,0)\in \R^2$
each of which has a cross cap singular point at $(0,0)$.
We are thinking that $f_i$ ($i=1,2$) are
map germs of cross cap singularities,
and  assume that $f_2$ is congruent to 
$f_1$ as a map germ (cf. Definition \ref{def:map-c}). Then there 
exist an orientation preserving
isometry $T$ of $\R^3$
and an orientation preserving local diffeomorphism $\phi$ such that
$f_2=T\circ f_1 \circ \phi$.
Without loss of generality, we may assume that
$f_1$ and $f_2$ are written in normal 
forms centered at $(0,0)$, that is,
they are expressed as 
\eqref{eq:west-2b} by setting $u=u_1=u_2$ and $v=v_1=v_2$.

We may regard that each point of $\R^3$
consists of column vectors.
Then $T$ can be identified with an orthogonal matrix
by multiplying them from the left.
Since $T$ maps the tangent line and the principal and
normal planes
of $f$ to those of $g$, it
is written in the form
$$ 
T=\pmt{
\epsilon_1 & 0 & 0 \\
0 & \epsilon_2 & 0 \\
0 & 0 & \epsilon_3 
},
$$
where $\epsilon_j\in \{1,-1\}$ for $j=1,2,3$.
By the condition  $a_{02}>0$, we have $\epsilon_3=1$.
So the possibility of $T$ is
\begin{equation*}
T_0:=\pmt{
1 & 0 & 0\\
0 & 1 & 0\\
0 & 0 & 1
},\quad
T_1:=\pmt{
1 & 0 & 0\\
0 & -1 & 0\\
0 & 0 & 1
},
\quad
T_2:=
\pmt{
-1 & 0 & 0\\
0 & 1 & 0\\
0 & 0 & 1
},
\quad
T_3:=
\pmt{
-1 & 0 & 0\\
0 & -1 & 0\\
0 & 0 & 1
}.
\end{equation*}
Suppose that $T=T_0$ (resp. $T=T_1$). Then
we have
\begin{align}
&T_0\circ f(u,v)=(u,uv+b_1(v),a_1(u,v)) \\
\quad
& \phantom{aaaa}
\Big(\text{resp. }\, T_1\circ f(u,-v)=(u,uv-b_1(-v),a_1(u,-v))\Big).
\end{align}
Comparing this with 
\begin{equation}\label{eq:1f2}
f_2(u,v)=(u,uv+b_2(v),a_2(u,v)),
\end{equation}
we obtain (cf. Theorem A)
\begin{align}\label{eq:0a}
&\phi(u,v)=(u,v),\quad  b_2(v)=b_1(v),\quad a_2(u,v)=a_1(u,v) \\
\label{eq:0b}
& \phantom{aaaa} 
\Big (\text{resp. }
\phi(u,v)=(u,-v),\quad  b_2(v)=-b_1(-v),\quad a_2(u,v)=a_1(u,-v) \Big).
\end{align}

We next suppose that $T=T_2$ ($T=T_3$). Then
we have
\begin{align}
&T_2\circ f(-u,-v)=(u,uv+b_1(-v),a_1(-u,-v)) \\
\quad
& \phantom{aaaa}
\Big(
\text{resp. }\, T_3\circ f(-u,v)=
(u,uv-b_1(v),a_1(-u,v))
\Big).
\end{align}
Again, comparing this with \eqref{eq:1f2},
we obtain 
\begin{align}\label{eq:1a}
&\phi(u,v)=(-u,-v),\quad  b_2(v)=b_1(-v),\quad a_2(u,v)=a_1(-u,-v) \\
\label{eq:1b}
& \phantom{aaaa}
\Big (\text{resp. }
\phi(u,v)=(-u,v),\quad  b_2(v)=-b_1(v),\quad a_2(u,v)=a_1(-u,v) \Big).
\end{align}
If $g$ is positively congruent to $f$, then
$T$ and $\phi$ must be orientation preserving maps.
Since $T_1,T_2$ are orientation reversing,
we have $T=T_0$ or $T_3$.
However, when $T=T_3$, we have seen that $\phi(u,v)=(-u,v)$,
that is, $\phi$ is orientation reversing. So 
only the possibility is the case that
$T$ and $\phi$ are the identity maps.
So we obtain \eqref{eq:0a}, proving the assertion.
\end{proof}

\begin{proof}[Proof of Corollaries C-1, C-2, C-3.]
In the above proof of Theorem B,
we may set $f:=f_1=f_2$.
Then, it can be directly observed that
\begin{itemize}
\item[(t1)] $T_1$ is the reflection with
respect to the principal plane of $f$,
\item[(t2)] $T_2$ is the reflection with
respect to the normal plane,
\item[(t3)] $T_3$ is the $180^\circ$-rotation
with respect to the normal line.
\end{itemize}
Regarding this, we first show that (i) implies (ii).
So we suppose that (i) of Corollaries C-$j$ ($j=1,2,3$) 
hold. Then $T_j(f(V))\subset f(U)$ holds for
a sufficiently small neighborhood $V(\subset U)$
of $p$.
By Fact \ref{thm:west-2} and Theorem A, $T_j\circ f\circ \phi_j=f$
($j=1,2$) holds for a local orientation preserving
diffeomorphism $\phi_j$ on $\R^2$.
Since $T_j$ is an involution,
so is $\phi_j$ are involutions.
Thus, we obtain (ii) of Corollary C-$j$.

If $f$ has the symmetry $T_1$, then
\eqref{eq:0b} implies
$$
-b(-v)=b(v),\quad a(u,-v)=a(u,v).
$$
Similarly, if $f$ has the symmetry $T_2$ (resp. $T_3$), then
\eqref{eq:1a} (resp.\,\, \eqref{eq:1b})
implies
\begin{align*}
& b(v)=b(-v),\quad a(u,v)=a(-u,-v) \\
&\phantom{aaaaaaaaa} 
\Big(\text{resp}.\,\, \, b(v)=0,\quad a(u,v)=a(-u,v)\Big).
\end{align*}
So we get the relations of $a$ and $b$ as in the statements.
Conversely, if $a$ and $b$ satisfy (iii) of 
Corollary C-$j$, then $f$ satisfies (1) obviously.
\end{proof}

\begin{Example}
Consider a family of cross caps 
$
f(u,v):=(u, uv+v^3, cu^2+v^2),
$
where $c$ is a real number.
In this example, the characteristic functions are
given by
$$
b(v)=v^3,\qquad a(u,v)=cu^2+v^2.
$$
Since $T_1\circ f(u,-v)=f(u,v)$,
this example is the case of Corollary C-1.
In fact, $b(v)$ is an odd function
and $v\mapsto a(u,v)$ is an even function. 
\end{Example}

\begin{Example}
Consider a family of cross caps 
$
f(u,v):=(u, uv+v^4, cu^2+v^2),
$
where $c$ is a real number.
In this example, the characteristic functions are
given by
$$
b(v)=v^4,\qquad a(u,v)=cu^2+v^2.
$$
Since $T_2\circ f(-u,-v)=f(u,v)$,
this example is the case of Corollary C-2.
In fact, $b(v)$ 
and $a(u,v)=a(-u,-v)$ are even functions. 
\end{Example}

\begin{Example}
Consider a family of cross caps 
$
f(u,v):=(u, uv, cu^2+v^2),
$
where $c$ is a real number.
The characteristic functions are
given by
$$
b(v)=0,\qquad a(u,v)=cu^2+v^2.
$$
Since $T_3\circ f(-u,v)=f(u,v)$,
this example is the case of Corollary C-3.
In fact, $b(v)$ vanishes identically
and $a(u,v)=a(-u,v)$ holds. 
Since $a(u,v)$ is an even function with respect to $u$
and $v$, this example has the property that
$$
T_1\circ f(u,-v)=f(u,v),\qquad T_2\circ f(-u,-v)=f(u,v),
$$
that is, the image of $f$ is invariant under
the three isometries $T_1,\,T_2$ and $T_3$.
\end{Example}

\begin{proof}[Proof of Proposition D]
We fix an orientable Riemannian $3$-manifold $(M^3,g)$
with a fixed point $o\in M^3$.
Let $\{e_1,e_2,e_3\}$ be an 
oriented orthonormal basis of
the tangent space $T_oM^3$ of $M^3$ at $o$,
and let $(x_1,x_2,x_3)$ be 
a geodesic normal coordinate system of
$M^3$ at $o$ induced by the exponential map at $o$.
We let $f:U\to M^3$ be a $C^\infty$-map which has
a cross cap singular point $p\in U$
satisfying $f(p)=o$.
Then by a suitable choice of 
$\{e_1,e_2,e_3\}$ and by a choice of
an oriented local coorodinate system $(u,v)$, 
we may assume that
$f(u,v)=(x_1(u,v),x_2(u,v),x_3(u,v))$
is written as
\begin{equation}\label{eq:forD}
x_1(u,v)=u,\quad
x_2(u,v)=uv +b(v),\quad
x_3(u,v)=a(u,v).
\end{equation}
We then consider the case that
$(M^3,g)$ is $M^3(c)$ with constant curvature $c(\in \R)$.
Regarding $T_oM^3(c)$ as a column vector space,
each isometry of $M^3(c)$ fixing $o$ corresponds
to a left-multiplication of a $3\times 3$ orthogonal matrix.
Moreover,
the left-multiplications of the orthogonal matrices $T_i$ $(i=1,2,3)$
on the coordinate system $(x_1,x_2,x_3)$
give isometric motions of $M^3(c)$ fixing $f(p)$.
Furthermore, the matrices $T_1,\,\,T_2$ and $T_3$
satisfy the conditions (t1), (t2) and (t3),
respectively.
Thus the assertions of Proposition D can be
proved by modifying the proofs of 
Corollaries C-$1$, C-$2$, C-$3$.
\end{proof}

\begin{Remark}
Even when $(M^3,g)$ is an arbitrarily given Riemannian 
$3$-manifold,
the functions $a(u,v)$ and $b(v)$ 
given in \eqref{eq:forD}
can be considered
as geometric invariants for cross caps.
(In fact,
the choice of $\{e_1,e_2,e_3\}$
corresponds to the coordinate changes of normal coordinates
$(x_1,x_2,x_3)$ via a left-multiplication of the orthogonal matrix
with determinant $1$.
Thus, $a(u,v)$ and $b(v)$ are determined independently 
of  a choice of the local coordinates 
$(u,v)$ and of
a choice of $\{e_1,e_2,e_3\}$ 
giving the expression \eqref{eq:forD} for $f$
as a consequence of Theorem A.)
\end{Remark}

\begin{acknowledgments}
The authors thank  Toshizumi Fukui for helpful comments.
\end{acknowledgments}

\end{document}